\let\olddiv\div
\begin{document}



\let\goth\mathfrak


\def\myend{{}\hfill{\small$\bigcirc$}}

\newenvironment{ctext}{%
  \par
  \smallskip
  \centering
}{%
 \par
 \smallskip
 \csname @endpetrue\endcsname
}

\def\id{\mathrm{id}}

\def\konftyp(#1,#2,#3,#4){\left( {#1}_{#2}\, {#3}_{#4} \right)}
\def\binokonf(#1){\konftyp({\binom{{#1}}{2}},{{#1}-2},\binom{{#1}}{3},3)}
\let\binoconf\binokonf

\newcount\liczbaa
\newcount\liczbab

\def\binkonfo(#1,#2){\liczbaa=#2 \liczbab=#2 \advance\liczbab by -2
\def\doa{\ifnum\liczbaa = 0\relax \else
\ifnum\liczbaa < 0 \the\liczbaa \else +\the\liczbaa\fi\fi}
\def\dob{\ifnum\liczbab = 0\relax \else
\ifnum\liczbab < 0 \the\liczbab \else +\the\liczbab\fi\fi}
\konftyp(\binom{#1\doa}{2},#1\dob,\binom{#1\doa}{3},3) }

\newcount\liczbac
\def\binkonf(#1,#2){\liczbac=#2 
\def\docc{\ifnum\liczbac = 0 \relax \else\ifnum\liczbac < 0\the\liczbac 
\else+\the\liczbac\fi\fi}
B_{#1{\docc}}}


\def\mutyp{{\mbox{\boldmath$\mu$}}}


\title[A $\mu$-classification of simple graphs]{%
Relative complements and a `switch'-classification of simple graphs}

\author{El{\.z}bieta B{\l}aszko, %
Ma{\l}gorzata Pra{\.z}mowska, %
Krzysztof Pra{\.z}mowski}


\maketitle

\begin{abstract}
  In the paper we introduce and study a classification of finite
  (simple, undirected, loopless) graphs with respect to a switch-equivalence
  (`local-complement' equivalence of \cite{pascvebl}, an analogue of the complement-equivalence of \cite{conell}).
  In the paper we propose a simple inductive method to compute the number of switch-types of graphs on $n$ vertices
  and we show that there are exactly 16 such types of graphs on 6 vertices.
  \par\noindent
  {\it keywords}: complete graph, bipartite graph, local complement (in a point).
  \par\noindent
  MSC2010: 05C76
\end{abstract}

\section*{Introduction}\label{sec:intro}

In the paper we introduce and study a classification of 
(simple, undirected, loopless) graphs which was defined, in fact, 
many years ago as a convenient tool to classify
configurations of some sort (see \cite{pascvebl}, \cite{mveb2proj}).
Namely such a graph is used as a parameter of construction of a so called
multiveblen configuration. And the resulting configurations are isomorphic
when corresponding graphs are equivalent exactly in the sense considered
in this paper. 
However, at its origins, the equivalence in question was just an auxiliary
notion. Its definition was rather (formally) complicated, though it was  easy
to use when one wanted to check `by hand' whether two given graphs 
are equivalent. This original definition, now considered rather as a 
criterion,  turned out to be equivalent to a very elegant one, which can be briefly
presented as follows.
It is a folklore that the binary symmetric difference operation
determines the structure of an abelian group on the family 
$\sub({\cal X})$ of all subsets of a fixed set $\cal X$, for every $\cal X$.
In particular, we can take $\cal X$ to be the set of edges of a complete
graph $K_X$. The set of all complete bipartite subgraphs of $K_X$
yields a subgroup, isomorphic to $2^{|X|-1}$. 
And, finally, two graphs defined on $X$ are equivalent when they are congruent modulo
the group of complete bipartite subgraphs defined on $X$.
\par\medskip
In our opinion, there are two arguments which prove that this equivalence is
worth studying.
The first argument follows from the way in which it is defined: it places our notion
within investigations on natural elementary algebra of sets.
The second argument consists in interesting interpretation in terms of `switches'.
Imagine that a graph $\cal G$ characterizes possible connections between $n$ places.
Another graph ${\cal G}'$ is equivalent to $\cal G$ if it arises from $\cal G$ when in some
places all existing connections are blocked, and all the other, previously blocked,
become unblocked.
\par
This idea has appeared in the mathematical literature (especially with connections to computer
sciences) many years ago, and there is a great amount of papers where complement-equivalence,
switching-equivalence and related notions were introduced and studied in the class of {\em directed} 
graphs (digraphs). Just to quote some of them: \cite{conell}, \cite{dalhaus}, \cite{lindsey}. 
\par\medskip
Our equivalence has $2^{\binom{|X|}{2} - |X| +1}$ equivalence classes and this number 
rapidly increases when $|X|$ increases.

Quite often we are interested not in a concrete graph (a realization) but more in its isomorphism
type. The number $\mutyp_n$ of isomorphism types of respective congruence classes
grows up not so rapidly, at least for small values of 
$n = |X|$. But the exact formula for $\mutyp_n = \mutyp(n)$
is hard to find. The reasoning used to compute the size of a congruence class
cannot be applied now, as in a congruence class various numbers of pairwise nonisomorphic
graphs may appear. For example, for $n=4$, there are three iso-types of graphs equivalent
to $K_4$ and five iso-types of graphs equivalent to $L_4$.
\par
In the paper we propose an inductive method to compute $\mutyp_n$.
It is evident that $\mutyp_3 = 2$ and it was proved in \cite{pascvebl} that
$\mutyp_4 = 3$ and $\mutyp_5 = 7$.
Here we show how our machinery gives $\mutyp_6 = 16$.
This is one, particular result of the paper.
At the same time we determine fundamental general properties of the equivalence introduced,
and show several general invariants of this equivalence.

Finally, it is worth to note that the family of all complete bipartite graphs defined on $X$ together
with all disjoint unions of pairs of complete subgraphs of $K_X$ is also a subgroup of 
all subgraphs of $K_X$. Consequently, congruence modulo this subgroup also defines an
equivalence of graphs. One can note that two graphs are equivalent in this sense if either they are 
equivalent in the sense introduced in the paper or one is equivalent to the boolean completion
of the second. An interpretation of this equivalence in terms of switches is
also possible, but now one should pay attention more to a binary labeling connected/unconnected,
in fact: a labelling of the edges of $K_X$ by two distinct symbols.

\section{Basic definitions and facts}\label{sec:defy}

\def\syminus{\olddiv}
\newcommand{\dcup}{\mathbin{\mathaccent\cdot\cup}}
\def\grafy{{\mathcal G}}
\def\bipar{{\mathcal D}}

\subsection{Graph-theoretical notations}

The equivalence of graphs investigated in the paper is closely related to a 
classification of partial Steiner triple systems of some sort.
However, the resulting classification of graphs has its own interest; 
it has quite natural intuitions and motivations concerning flows
on (undirected) graphs.

Let $X$ be a nonempty set; then $\sub(X)$  is the set of all subsets of $X$.
For an integer $k$ we write $\sub_k(X)$ for the set
of $k$-subsets of $X$.
A {\em graph} (an undirected graph without multiplied edges and loops  
defined on a set $X$ is an arbitrary subset $\cal E$ of $\sub_2(X)$;
if $\{x,y\}=e\in{\cal E}$ we say that $x,y$ are the {\em vertices} of the {\em edge} $e$.
More precisely, we sometimes say that a graph is the structure
$\struct{X,{\cal E}}$: if ${\cal E}\subset \sub_2(Y)$ for $Y\subsetneq X$ this 
caution is necessary.
Clearly, if $G = \struct{X,{\cal E}}$ is a graph then
  $\varkappa(G) = \struct{X,\varkappa({\cal E})}$ with $\varkappa({\cal E}) = \sub_2(X)\setminus {\cal E}$
is also a  graph defined on $X$.
Most of the notions concerning graphs used in the paper are standard and can be found in any 
standard textbook, like e.g \cite{wilson}

Recall the definition of the symmetric difference operation $\syminus$ defined on the family of sets:
$A \syminus B = (A\setminus B)\cup(B\setminus A) = (A\cup B)\setminus(A\cap B)$.
Recall also that $\syminus$ defines on each set $\sub(W)$, with $W$ arbitrary, the structure of an abelian group with $\emptyset$
as the unit and each element of order $2$.

Two  operations on graphs will be frequently used in the paper:
Let 
  $G_i = \struct{X_i,{\cal E}_i}$ for $i=1,2$.
\begin{eqnarray}
  G_1 \syminus G_2 & := & \struct{X,{\cal E}_1\syminus{\cal E}_2}
  \\ \nonumber && \strut\quad\quad\quad\text{when } X_1 = X_2 = X,
  \\
  G_1 \dcup G_2 & := & \struct{X_1\cup X_2,{\cal E}_1\cup{\cal E}_2}
  \\ \nonumber && \strut\quad\quad\quad\text{when } X_1\cap X_2 = \emptyset.
\end{eqnarray}

Several types of graphs are frequently considered in the literature;
these will also play a crucial role in forthcoming classifications.
\begin{description}\itemsep-2pt
 \item[$K_X$] $=\struct{X,\sub_2(X)}$: the {\em complete} graph on $X$;
 \item[$N_X$] $=\struct{X,\emptyset} = \varkappa(K_X)$: the {\em empty} graph
   (note that in this case `a graph' must be considered as `a structure');
 \item[$K_n$, $N_n$]: an arbitrary graph isomorphic to $K_X$, $N_X$ with $|X| = n$;
   clearly, the isomorphism type of a complete graph and of an empty graph depends only
   on the cardinality of its set of vertices.
 \item[$C_n$]: a {\em cyclic} graph on a $n$-set;
 \item[$L_n$]: a {\em linear} graph on $n$ vertices;
 \item[$K_{A,B}$] = $\struct{A\cup B,\{ \{ a,b\}\colon a\in A,b\in B \}}$ defined when $A\cap B=\emptyset$,
   $A\neq\emptyset$ or $B\neq\emptyset$, a {\em complete bipartite} graph on $A\cup B$.
 \item[$K_{n_1,n_2}$]:  a graph $K_{A,B}$ with $|A|=n_1,\,|B|=n_2$.
\end{description}
Finally, we set 
\begin{eqnarray}
  \grafy(X) & := &  \left\{ \struct{X,{\cal E}}\colon {\cal E}\subset\sub_2(X) \right\},
  \\
  \bipar(X) & := &  \left\{ K_{A,X\setminus A}\colon A\subset X \right\}.
\end{eqnarray}

In most parts of the paper we shall omit proofs of elementary set theoretic
formulas, like e.g. \eqref{plus:bipar}.

\subsection{Equivalences of graphs}

\def\podo{\approx}
\def\rowne{\sim}

Let us introduce the following two relations $\cong$ and $\podo$
defined on the family $\grafy(X)$.
Let $G_1,G_2\in\grafy(X)$.
\begin{eqnarray}
  G_1 \cong G_2 & \iff & G_1, G_2 \; \text{ are isomorphic},
  \\
  G_1 \podo G_2 & \iff & G_1 \syminus G_2 \in \bipar(X).
\end{eqnarray}
The following formula is valid for any $A,B\subset X$:
\begin{equation}\label{plus:bipar}
  K_{A,X\setminus A}\syminus K_{B,X\setminus B} = 
  K_{A\syminus B, X\setminus(A\syminus B)}.
\end{equation}
As an immediate consequence of \eqref{plus:bipar} and properties of the operation $\syminus$ we infer
\begin{prop}\label{prop:podo}
  The class $\bipar(X)$ is\ closed under $\syminus$. Consequently,
  the relation $\podo$ is an equivalence in $\grafy(X)$.
\end{prop}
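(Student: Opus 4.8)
The plan is to establish the two assertions in the order stated: first the closure of $\bipar(X)$ under $\syminus$, and then, as a consequence, that $\podo$ is an equivalence.

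For the closure I would simply invoke formula \eqref{plus:bipar}. Given two arbitrary members $K_{A,X\setminus A}$ and $K_{B,X\setminus B}$ of $\bipar(X)$ (with $A,B\subset X$), that formula asserts that their symmetric difference equals $K_{A\syminus B,\,X\setminus(A\syminus B)}$. Setting $C := A\syminus B$, which is again a subset of $X$, the result has the form $K_{C,X\setminus C}$ and therefore lies in $\bipar(X)$. This exhausts the content of the first claim; all the genuine set-theoretic work is already packaged in \eqref{plus:bipar}.

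For the second claim the cleanest route is to notice that, once closure is known, $\bipar(X)$ is a subgroup of the abelian group $\struct{\grafy(X),\syminus}$: it is closed under $\syminus$ by the first part, it contains the unit $N_X = K_{\emptyset,X}$ (take $A=\emptyset$), and, since every element of this group has order two, each member is its own inverse, so no separate inverse check is required. The relation $\podo$ is then precisely the congruence modulo this subgroup, namely $G_1\podo G_2\iff G_1\syminus G_2\in\bipar(X)$, and it is standard that such a relation is an equivalence. Should one prefer to verify the three axioms by hand, reflexivity holds because $G\syminus G = N_X\in\bipar(X)$; symmetry holds because $G_1\syminus G_2 = G_2\syminus G_1$ on edge sets; and transitivity follows from the identity $G_1\syminus G_3 = (G_1\syminus G_2)\syminus(G_2\syminus G_3)$ (using associativity and $G_2\syminus G_2 = N_X$) together with the closure just proved, which guarantees that the right-hand side belongs to $\bipar(X)$ whenever both bracketed terms do.

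I do not expect any real obstacle here: the decisive step is formula \eqref{plus:bipar}, and everything else is the routine passage from ``subgroup'' to ``coset equivalence.'' The only point deserving a moment's care is confirming that $N_X$ genuinely belongs to $\bipar(X)$ — equivalently, that the reflexivity axiom is available — which is immediate from the convention that $A=\emptyset$ is permitted in the definition of $\bipar(X)$.
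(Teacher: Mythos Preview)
Your proposal is correct and follows exactly the route the paper indicates: the paper simply states that the proposition is ``an immediate consequence of \eqref{plus:bipar} and properties of the operation $\syminus$,'' and your write-up is a faithful unpacking of that sentence. The only detail you flag---that $N_X=K_{\emptyset,X}\in\bipar(X)$---is indeed available from the paper's definitions, so there is no gap.
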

Next, we note another set theoretical relation:
\begin{equation}\label{plus:pelne}
  K_X \syminus K_{A,X\setminus A} = K_A \dcup K_B.
\end{equation}
These two: \eqref{plus:pelne} and \eqref{plus:bipar} 
give us immediately two equivalence classes of $\podo$:
\begin{equation}\label{klasy:1}
  [K_X]_{\podo} = \left\{ K_A\dcup K_B\colon A\cup B = X, A\cap B=\emptyset \right\},\;
  [N_X]_{\podo}	= \bipar(X).
\end{equation}
One more formula of this type can be also worth to note:
\begin{equation}
 (K_A\dcup N_{X\setminus A}) \syminus K_{A,X\setminus A} = K_X\setminus K_{X\setminus A}
 = \varkappa(K_{X\setminus A}\dcup N_A),
\end{equation}
so in one $\podo$-class are: a complete subgraph and the boolean complement 
of suitable another complete subgraph.

In the original paper \cite{pascvebl} the definition of $\podo$ was introduced with the 
help of the operation of {\em local complementation}:
for $a\in X$ and ${\cal E}\in\grafy(X)$ we proceed as follows. 
\begin{ctext}
   Let $e\in\sub_2(X)$. If $a\notin e$ then 
   $e\in\mu_a({\cal E})$ iff $e\in{\cal E}$.
   If $a\in e$ then
   $e\in\mu_a({\cal E})$ iff $e\notin{\cal E}$.
\end{ctext}
So, $\mu_a$ is an operation $\grafy(X)\longrightarrow\grafy(X)$;
it is called the local complementation in the point $a$.
It is seen that
\begin{equation}
  \mu_a(G) = G\syminus K_{\{a\},X\setminus\{a\}}.
\end{equation}
Consequently, with the help of \eqref{plus:bipar}, 
we arrive to the definition introduced in \cite{pascvebl}:
\begin{prop}\label{prop:locale}
  Let $G_1,G_2\in\grafy(X)$.
  $G_1\podo G_2$ iff there is a sequence $a_1,\ldots,a_k$ of elements of $X$ such that
  $G_2 = \mu_{a_k}(\ldots \mu_{a_1}(G_1)\ldots )$.
\end{prop}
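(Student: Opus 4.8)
The plan is to read off everything from the identity $\mu_a(G)=G\syminus K_{\{a\},X\setminus\{a\}}$ noted just above the statement, which turns each local complementation into the addition (in the sense of $\syminus$) of a single ``star'' $K_{\{a\},X\setminus\{a\}}\in\bipar(X)$. Since $\syminus$ makes the edge sets in $\grafy(X)$ into an abelian group in which every element has order $2$, applying $\mu_{a_1},\ldots,\mu_{a_k}$ in succession collapses to one expression,
\[
  \mu_{a_k}(\ldots\mu_{a_1}(G_1)\ldots)=G_1\syminus K_{\{a_1\},X\setminus\{a_1\}}\syminus\cdots\syminus K_{\{a_k\},X\setminus\{a_k\}}.
\]
Thus the whole statement reduces to asking which graphs $D$ with $G_2=G_1\syminus D$ are symmetric differences of stars, and the two implications correspond to the two halves of the answer: such $D$ are exactly the members of $\bipar(X)$.

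For the implication $(\Leftarrow)$ I would argue directly. Given the sequence $a_1,\ldots,a_k$, the displayed formula yields $G_1\syminus G_2=K_{\{a_1\},X\setminus\{a_1\}}\syminus\cdots\syminus K_{\{a_k\},X\setminus\{a_k\}}$, a symmetric difference of members of $\bipar(X)$. By Proposition~\ref{prop:podo} the class $\bipar(X)$ is closed under $\syminus$, so $G_1\syminus G_2\in\bipar(X)$, which is precisely $G_1\podo G_2$.

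For the converse $(\Rightarrow)$ the heart of the matter is a decomposition lemma: every complete bipartite graph is the symmetric difference of the stars centred at the vertices of one of its parts, that is, for $A=\{a_1,\ldots,a_k\}\subset X$ with pairwise distinct $a_i$,
\[
  K_{A,X\setminus A}=K_{\{a_1\},X\setminus\{a_1\}}\syminus\cdots\syminus K_{\{a_k\},X\setminus\{a_k\}}.
\]
I would prove this by induction on $k$ using \eqref{plus:bipar}: the inductive step rewrites $K_{\{a_1,\ldots,a_j\},X\setminus\{a_1,\ldots,a_j\}}\syminus K_{\{a_{j+1}\},X\setminus\{a_{j+1}\}}$ as $K_{\{a_1,\ldots,a_j\}\syminus\{a_{j+1}\},\,\ldots}$, and because $a_{j+1}$ does not occur among $a_1,\ldots,a_j$ the singleton symmetric difference merely adjoins $a_{j+1}$, producing $K_{\{a_1,\ldots,a_{j+1}\},\,\ldots}$. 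Granting the lemma, if $G_1\podo G_2$ then $G_1\syminus G_2=K_{A,X\setminus A}$ for some $A=\{a_1,\ldots,a_k\}$, whence $G_2=G_1\syminus K_{A,X\setminus A}=G_1\syminus K_{\{a_1\},X\setminus\{a_1\}}\syminus\cdots\syminus K_{\{a_k\},X\setminus\{a_k\}}=\mu_{a_k}(\ldots\mu_{a_1}(G_1)\ldots)$, where order~$2$ of each element is used to pass from $G_1\syminus G_2$ back to $G_2$.

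The only genuinely delicate point is this decomposition lemma, and specifically the bookkeeping that the $a_i$ are pairwise distinct, so that the iterated singleton symmetric difference reproduces exactly the set $A$; everything else is formal arithmetic in the symmetric-difference group on $\grafy(X)$. I would also dispose of the degenerate case separately: when $A=\emptyset$ one has $K_{\emptyset,X}=N_X$, so $G_1\syminus G_2$ has empty edge set, forcing $G_1=G_2$, which is matched by the empty sequence $k=0$.
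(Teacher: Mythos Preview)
Your proposal is correct and follows exactly the approach the paper indicates: the paper does not give a formal proof but simply states that the proposition follows from the identity $\mu_a(G)=G\syminus K_{\{a\},X\setminus\{a\}}$ together with \eqref{plus:bipar}, and your argument is precisely the detailed unpacking of that hint (iterated use of \eqref{plus:bipar} to write any $K_{A,X\setminus A}$ as a $\syminus$-sum of stars, and closure of $\bipar(X)$ under $\syminus$ for the reverse direction).
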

Clearly, $\cong$ is also an equivalence relation on $\grafy(X)$.
But $\podo$ and $\cong$ are essentially distinct:
\begin{sentences}\itemsep-2pt
\item
  Let $a\in X$, $|X|\geq 2$; then
  $N_X \podo K_{\{a\},X\setminus\{a\}},
  N_X \not\cong K_{\{a\},X\setminus\{a\}}$.
\item
  Let $A,B\subsetneq X$,
  $|A| = |B| >1$.
  Then $G_1 = K_A \dcup N_{X\setminus A} \cong K_B \dcup N_{X\setminus B} = G_2$.
  It is seen that 
  if $A\cup B\neq X$ then $G_1\syminus G_2\notin \bipar(X)$ and
  then $G_1 {\not\podo} G_2$.
  Take, in particular, $A = \{1,2\}$, $B = \{2,3\}$, $X = A \cup B \cup \{4\}$.
  Then $G_1 \syminus G_2$ is the $L_3$-path $1-2-3$ not in $\bipar(X)$.
\end{sentences}

The following is easy to prove
\begin{prop}\label{uzup:global}
  Let $G_1,G_2\in\grafy(X)$.
  \begin{sentences}\itemsep-2pt
  \item\label{wlas1}
    If $G_1\cong G_2$ then $\varkappa(G_1)\cong\varkappa(G_2)$.
  \item\label{wlas2}
    If $G_1\podo G_2$ then $\varkappa(G_1)\podo\varkappa(G_2)$.
  \end{sentences}
\end{prop}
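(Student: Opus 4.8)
The plan is to dispatch the two assertions separately, since each rests on a single structural observation; this is what makes the proposition elementary.

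For the first assertion I would begin with an isomorphism witnessing $G_1 \cong G_2$, i.e.\ a bijection $\varphi$ of $X$ with $\{x,y\}\in{\cal E}_1 \iff \{\varphi(x),\varphi(y)\}\in{\cal E}_2$, where $G_i=\struct{X,{\cal E}_i}$. The point is that $\varkappa$ merely interchanges edges and non-edges, so the \emph{same} $\varphi$ should serve: negating both sides of the displayed equivalence gives $\{x,y\}\notin{\cal E}_1 \iff \{\varphi(x),\varphi(y)\}\notin{\cal E}_2$, that is $\{x,y\}\in\varkappa({\cal E}_1)\iff\{\varphi(x),\varphi(y)\}\in\varkappa({\cal E}_2)$, whence $\varphi$ is an isomorphism of $\varkappa(G_1)$ onto $\varkappa(G_2)$.

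For the second assertion the key remark is that boolean complementation is nothing but translation by the top element in the group $(\grafy(X),\syminus)$: from $\varkappa({\cal E})=\sub_2(X)\setminus{\cal E}=\sub_2(X)\syminus{\cal E}$ one reads off $\varkappa(G)=K_X\syminus G$. I would then use commutativity and associativity of $\syminus$, together with the fact that $K_X\syminus K_X=N_X$ is the unit, to collapse
\[
  \varkappa(G_1)\syminus\varkappa(G_2)=(K_X\syminus G_1)\syminus(K_X\syminus G_2)=G_1\syminus G_2 .
\]
Since $G_1\syminus G_2\in\bipar(X)$ by hypothesis, the left-hand side lies in $\bipar(X)$ as well, which is exactly $\varkappa(G_1)\podo\varkappa(G_2)$.

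There is no genuine obstacle in either part: the whole content is the identity $\varkappa(G)=K_X\syminus G$, after which the two copies of $K_X$ cancel and the second claim reduces to the tautology $\varkappa(G_1)\syminus\varkappa(G_2)=G_1\syminus G_2$, while the first claim amounts to checking that a graph isomorphism is carried over verbatim to the complements. The only mild care needed is the passage between the graph-level operation $\syminus$ on $\grafy(X)$ and the set-level operation on $\sub(\sub_2(X))$, which Proposition~\ref{prop:podo} and the group structure recalled earlier already license.
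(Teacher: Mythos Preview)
Your argument is correct and follows essentially the same route as the paper: part~\eqref{wlas1} is the observation that a bijection preserving edges also preserves non-edges, and part~\eqref{wlas2} rests on the identity $\varkappa(G)=K_X\syminus G$, after which the two copies of $K_X$ cancel. The only cosmetic difference is that the paper phrases \eqref{wlas2} by writing $G_2=G_1\syminus H$ and showing $\varkappa(G_2)=\varkappa(G_1)\syminus H$, whereas you compute $\varkappa(G_1)\syminus\varkappa(G_2)=G_1\syminus G_2$ directly; the content is identical.
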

\begin{proof}
  \eqref{wlas1} is evident: if a bijection $f\colon X\longrightarrow X$ maps
    ${\cal E}_1$ onto ${\cal E}_2$, where $G_i = \struct{X,{\cal E}_i}$ for $i=1,2$,
  then $f$ maps 
    $\sub_2(X)\setminus{\cal E}_1$ onto $\sub_2(X)\setminus{\cal E}_2$ 
  as well.
 \par
  Ad \eqref{wlas2}: Note that for any $G\in \grafy(X)$ we have 
  $\varkappa(G) = K_X\syminus G$. So, suppose
    $G_1\podo G_2$ i.e. $G_2 = G_1 \syminus H$
  with $H\in \bipar(X)$. Then 
  \begin{math}
    \varkappa(G_2) = G_2 \syminus K_X = (G_1 \syminus H)\syminus K_X =
    (G_1 \syminus K_X)\syminus H = \varkappa(G_1)\syminus H.
  \end{math}
  Finally, $\varkappa(G_1)\podo\varkappa(G_2)$.
\end{proof}
Clearly, {\em $\cong$ is a congruence with respect to the operation $\dcup$}: 
if $G_1\cong G'_1$
and $G_2 \cong G'_2$ (with suitably disjoint sets of vertices) then 
$G_1 \dcup G_2 \cong G'_1\dcup G'_2$; it is evident. Unhappily, 
{\em $\podo$ is not a congruence}. Indeed,
it is known that 
  $K_2\dcup K_1\podo K_3$ and $K_3\dcup K_1 \podo K_4$.
But 
  $(K_2 \dcup K_1)\dcup K_1 = K_2 \dcup N_2\podo L_4$ 
and it is known also that
  $K_4 \not\podo L_4$.

\begin{lem}
  Let $G_1,G_2,G_3\in \grafy(X)$ and 
  $G_1 \podo G_3 \cong G_2$. 
  Then there is $G'_3\in\grafy(X)$  such that 
  $G_1 \cong G'_3 \podo G_2$.
\end{lem}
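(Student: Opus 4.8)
The plan is to let the very isomorphism that witnesses $G_3 \cong G_2$ also transport $G_1$, and to exploit the fact that any relabelling of vertices respects both the operation $\syminus$ and the class $\bipar(X)$. In effect the lemma asserts that the composite relations ``$\cong$ after $\podo$'' and ``$\podo$ after $\cong$'' coincide, and the natural way to see this is to conjugate a $\bipar(X)$-difference by a graph isomorphism.

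First I would fix a bijection $f\colon X\longrightarrow X$ realizing $G_3\cong G_2$, so that $f$ carries the edge set of $G_3$ onto the edge set of $G_2$; writing $f({\cal E})=\{\{f(x),f(y)\}\colon\{x,y\}\in{\cal E}\}$ for the induced action on edge sets, we have $f(G_3)=G_2$. I would then record two elementary facts about such an $f$. The first is that $f$ commutes with symmetric difference, $f(G\syminus H)=f(G)\syminus f(H)$, since $f$ acts as a bijection on $\sub_2(X)$ and $\syminus$ is computed on edge sets. The second is that $f$ permutes $\bipar(X)$: for every $A\subset X$ one has $f(K_{A,X\setminus A})=K_{f(A),X\setminus f(A)}\in\bipar(X)$, because $f$ sends the partition $(A,X\setminus A)$ to $(f(A),X\setminus f(A))$ and cross-edges to cross-edges.

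With these in hand the construction is immediate. I would set $G'_3:=f(G_1)$; then $G_1\cong G'_3$ because $f$ is a bijection. To check $G'_3\podo G_2$, I would use the hypothesis $G_1\podo G_3$, which gives $H:=G_1\syminus G_3\in\bipar(X)$, and compute $G'_3\syminus G_2 = f(G_1)\syminus f(G_3)=f(G_1\syminus G_3)=f(H)$. By the second recorded fact $f(H)\in\bipar(X)$, so $G'_3\syminus G_2\in\bipar(X)$, that is, $G'_3\podo G_2$, as required. There is no genuine obstacle here: the whole argument rests on the single observation that a vertex-relabelling sends a complete bipartite graph to a complete bipartite graph, so isomorphisms preserve $\bipar(X)$ and hence the relation $\podo$.
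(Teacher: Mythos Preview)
Your argument is correct and is essentially the paper's own proof: both take the isomorphism $f$ with $f(G_3)=G_2$, set $G'_3:=f(G_1)$, and use that $f$ commutes with $\syminus$ and maps $\bipar(X)$ into itself; the paper writes $G'_3=G_2\syminus f(H)$ and then computes that this equals $f(G_1)$, which is exactly your choice.
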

\begin{proof}
  Let $f$ be a bijection that maps $G_3$ onto $G_2$,
  let $G_1 = G_3 \syminus H$ with $H\in\bipar(X)$.
  Then $f(H)\in\bipar(X)$ and
  \begin{math}
    G_2 \syminus f(H) = f(G_3)\syminus f(H) = f(G_3\syminus H) = f(G_1).
  \end{math}
  With $G'_3 = G_2 + f(H)$ we get our claim.
\end{proof}

As a consequence of the above and of \ref{prop:podo}
we get that the relation $\rowne$ defined on $\grafy(X)$ by the formula
\begin{equation}\label{def:rowne}
  G_1 \rowne G_2 \colon\iff G_1 \podo G_3 \cong G_2
  \text{ for a graph } G_3
\end{equation}
is an equivalence relation. This is our main subject of the paper.
For small values of $n = |X|$ the classification of the elements of $\grafy(X)$
with respect to the relation $\rowne$ is known (cf. \cite{pascvebl}). Clearly, 
$K_1 = N_1$ and $K_2 = L_2 \podo N_2$.
\begin{description}\itemsep-2pt
\item[$n=2$]: $\grafy(X)/\rowne \;\,= \big\{K_2,N_2\big\}$. 
\item[$n=3$]: $\grafy(X)/\rowne \;\,= \big\{ [K_3]_{\rowne},[N_3]_{\rowne} \big\}$
  (cf. \eqref{klasy:1}).
\item[$n=4$]: $\grafy(X)/\rowne \;\,= \big\{ [K_4]_{\rowne},[N_4]_{\rowne}, [L_4]_{\rowne} \big\}$.
\end{description}
The elements in the classes enumerated above are pairwise distinct.
Moreover, 
  $L_4 \rowne L_2\dcup N_2 \rowne L_3\dcup N_1$.
The case $n = 5$, which is also already solved, will be quoted in the following.

Directly from \eqref{def:rowne} and \ref{uzup:global}
we have
\begin{prop}
  Let $G_1,G_2\in\grafy(X)$. If $X_1\rowne X_2$ then 
    $\varkappa(G_1) \rowne \varkappa(G_2)$.
\end{prop}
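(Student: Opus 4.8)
The plan is to unwind the definition \eqref{def:rowne} of $\rowne$ and then transport the witnessing chain through $\varkappa$ by applying the two parts of Proposition \ref{uzup:global} separately to its two links. Assume $G_1 \rowne G_2$. By \eqref{def:rowne} this means there is a graph $G_3 \in \grafy(X)$ with $G_1 \podo G_3$ and $G_3 \cong G_2$. Since $\varkappa$ sends a graph on $X$ to a graph on $X$, the natural candidate for a witness of $\varkappa(G_1) \rowne \varkappa(G_2)$ is simply $\varkappa(G_3)$.

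First I would apply \ref{uzup:global}\eqref{wlas2} to the link $G_1 \podo G_3$, obtaining $\varkappa(G_1) \podo \varkappa(G_3)$. Next I would apply \ref{uzup:global}\eqref{wlas1} to the link $G_3 \cong G_2$, obtaining $\varkappa(G_3) \cong \varkappa(G_2)$. Concatenating these two gives $\varkappa(G_1) \podo \varkappa(G_3) \cong \varkappa(G_2)$, which is exactly the shape required by \eqref{def:rowne}; hence $\varkappa(G_1) \rowne \varkappa(G_2)$.

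The only point deserving attention is that the definition \eqref{def:rowne} is asymmetric — it couples a $\podo$-step followed by a $\cong$-step — so one must invoke the two parts of Proposition \ref{uzup:global} in the matching order, with \eqref{wlas2} handling the $\podo$-link and \eqref{wlas1} the $\cong$-link. Because $\varkappa$ preserves the vertex set $X$, the graph $\varkappa(G_3)$ lies in $\grafy(X)$ and the entire chain stays inside $\grafy(X)$, so no genuine obstacle arises; the statement is an immediate corollary of \ref{uzup:global} once the witness $G_3$ is carried through by $\varkappa$.
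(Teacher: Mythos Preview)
Your proof is correct and is exactly the argument the paper has in mind: the paper simply records that the proposition follows ``directly from \eqref{def:rowne} and \ref{uzup:global}'', and you have spelled out precisely that direct derivation by pushing the witnessing intermediary $G_3$ through $\varkappa$ and applying the two parts of \ref{uzup:global} to the $\podo$-link and the $\cong$-link respectively.
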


For a graph $G = \struct{X,{\cal E}}$ and $Z\subset X$ we write
  $G\restriction Z = \struct{Z,{\cal E}\cap \sub_2(Z)}$.
As a convenient tool for determining which graphs are {\em not} $\rowne$-related we give 
\begin{lem}
  If $G_1,G_2\in\grafy(X)$, $Z\subset X$, and $G_1\podo G_2$ then
  $G_1\restriction Z \podo G_2\restriction Z$.
\end{lem}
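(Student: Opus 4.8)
The plan is to reduce everything to the single observation that restriction to $Z$ commutes with the symmetric difference of graphs. First I would note that for $G = \struct{X,{\cal E}}$ the restriction $G\restriction Z$ is obtained simply by intersecting the edge set with the fixed family $\sub_2(Z)$. Since for any fixed set $S$ and any two sets ${\cal E}_1,{\cal E}_2$ one has the elementary boolean identity $({\cal E}_1\cap S)\syminus({\cal E}_2\cap S) = ({\cal E}_1\syminus{\cal E}_2)\cap S$ (an element outside $S$ lies in neither side, and for an element of $S$ membership in ${\cal E}_i\cap S$ is just membership in ${\cal E}_i$), taking $S = \sub_2(Z)$ yields
\[
  (G_1\restriction Z)\syminus(G_2\restriction Z) = (G_1\syminus G_2)\restriction Z .
\]
Thus the whole statement will follow once I know that the restriction to $Z$ of a member of $\bipar(X)$ again lies in $\bipar(Z)$.

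Next I would invoke the hypothesis $G_1\podo G_2$, which by definition means $G_1\syminus G_2 = K_{A,X\setminus A}$ for some $A\subset X$. So it remains to compute $K_{A,X\setminus A}\restriction Z$ and to check that it is a complete bipartite graph on $Z$. The edges of $K_{A,X\setminus A}$ are the pairs $\{a,b\}$ with $a\in A$ and $b\in X\setminus A$; retaining only those with both endpoints in $Z$ leaves exactly the pairs $\{a,b\}$ with $a\in A\cap Z$ and $b\in(X\setminus A)\cap Z$. Writing $A' = A\cap Z$ and noting $(X\setminus A)\cap Z = Z\setminus A'$, these are precisely the edges of $K_{A',Z\setminus A'}$. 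Hence $K_{A,X\setminus A}\restriction Z = K_{A\cap Z,\,Z\setminus(A\cap Z)}\in\bipar(Z)$.

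Combining the two computations gives $(G_1\restriction Z)\syminus(G_2\restriction Z)\in\bipar(Z)$, which is exactly $G_1\restriction Z\podo G_2\restriction Z$, as required. There is no genuine obstacle here; the only points demanding a little care are the degenerate cases $A\cap Z = \emptyset$ and $A\cap Z = Z$, where the restricted graph is the empty graph $N_Z$. This is harmless, since $\bipar(Z)$ contains $N_Z$ (take $A' = \emptyset$ in the definition of $\bipar$), and the commutation identity itself is pure routine boolean algebra.
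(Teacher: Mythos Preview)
Your proof is correct. It differs from the paper's argument: the paper invokes the local-complementation characterization (Proposition~\ref{prop:locale}), decomposing the passage from $G_1$ to $G_2$ into a sequence of operations $\mu_{a_j}$ and then observing that each $\mu_{a_j}$ either acts trivially on edges inside $Z$ (when $a_j\notin Z$) or restricts to the corresponding $\mu_{a_j}$ on $\grafy(Z)$ (when $a_j\in Z$). You instead work directly with the primary definition $G_1\syminus G_2\in\bipar(X)$, using the boolean identity $({\cal E}_1\cap S)\syminus({\cal E}_2\cap S)=({\cal E}_1\syminus{\cal E}_2)\cap S$ together with the one-line computation $K_{A,X\setminus A}\restriction Z = K_{A\cap Z,\,Z\setminus(A\cap Z)}$. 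Your route is slightly more economical, since it avoids the detour through sequences of local complementations; the paper's route has the mild advantage of making explicit which local complementations witness the restricted equivalence, which could be useful if one wanted an effective version.
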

\begin{proof}
  Let ${\cal Z}_i = {\cal E}_i\cap\sub_2(Z)$ for $i=1,2$.
  Assume that $G_1 \podo G_2$. So, there is a sequence of local complementations
  $\mu_{a_1},\ldots,\mu_{a_k}$ which composition maps ${\cal E}_1$ onto ${\cal E}_2$. 
  Note that
    if $a_j\notin Z$ then $\mu_{a_j}({\cal E}\cap \sub_2(Z)) = {\cal E}\cap\sub_2(Z)$
  and if $a_j\in Z$ then local complementation $\mu_{a_j}$ maps 
    ${\cal E}\cap \sub_2(Z)$ onto $\big(\mu_{a_j}({\cal E})\big)\cap\sub_2(Z)$,
  for every ${\cal E}\subset\sub_2(X)$.
  Consequently, a sequence of local complementations in points of $Z$ maps
  ${\cal Z}_1$ onto ${\cal  Z}_2$ and we are done
  by \ref{prop:locale}.
\end{proof}
And afterwards, as important invariants we obtain 
\begin{prop}\label{invar:struk}
  Let $G_0$ be a graph on $k$ vertices, $k < |X|$.
  For $G\in\grafy(X)$ we write
  \begin{equation}
    G/G_0 = \big\{ Z\in\sub_k(X)\colon G\restriction Z \rowne G_0 \big\}
  \end{equation}
  Let $G_1,G_2\in\grafy(X)$.
  If $G_1\rowne G_2$ then there is a bijection $f$ of $X$ which maps
  the family $G_1/G_0$ onto $G_2/G_0$.
\end{prop}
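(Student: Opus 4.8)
The plan is to exploit the factorization of $\rowne$ through $\podo$ and $\cong$ that is built into its definition \eqref{def:rowne}, together with the restriction Lemma proved above, and to reduce the statement to two elementary cases: the purely isomorphic case $G_1\cong G_2$ and the purely ``switch'' case $G_1\podo G_2$.

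First I would treat the isomorphism part. Suppose $G_1 \cong G_2$ and let $f$ be a bijection of $X$ carrying the edge set of $G_1$ onto that of $G_2$. The key computation is that restriction commutes with $f$: for any $Z \in \sub_k(X)$ one checks directly from the definitions that $G_2 \restriction f(Z) = f\bigl(G_1 \restriction Z\bigr)$, since an edge $\{f(a),f(b)\}$ of $G_2$ lies inside $f(Z)$ exactly when $\{a,b\}$ is an edge of $G_1$ lying inside $Z$. Hence $G_1 \restriction Z \cong G_2 \restriction f(Z)$, so in particular $G_1 \restriction Z \rowne G_2 \restriction f(Z)$. Transitivity of $\rowne$ then yields $G_1\restriction Z \rowne G_0$ if and only if $G_2 \restriction f(Z) \rowne G_0$, that is, $Z \in G_1/G_0$ if and only if $f(Z) \in G_2/G_0$. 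Since $f$ acts as a bijection of $\sub_k(X)$, it carries $G_1/G_0$ onto $G_2/G_0$.

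For the $\podo$ part I would show that the identity map already works: if $G_1 \podo G_2$ then $G_1/G_0 = G_2/G_0$. This follows at once from the restriction Lemma, which gives $G_1 \restriction Z \podo G_2 \restriction Z$ for every $Z$; since $\podo$ is contained in $\rowne$ and $\rowne$ is transitive, $G_1 \restriction Z \rowne G_0$ holds precisely when $G_2 \restriction Z \rowne G_0$, so the two families coincide.

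Finally I would assemble the general case. By \eqref{def:rowne}, $G_1 \rowne G_2$ means there is a graph $G_3$ with $G_1 \podo G_3 \cong G_2$. The $\podo$-step gives $G_1/G_0 = G_3/G_0$, and the $\cong$-step supplies a bijection $f$ of $X$ with $f\bigl(G_3/G_0\bigr) = G_2/G_0$; composing these, $f\bigl(G_1/G_0\bigr) = G_2/G_0$, which is the desired conclusion. I do not expect a genuine obstacle: everything rests on the already-established transitivity of $\rowne$ and on the restriction Lemma. The single point that deserves care is the verification that restriction commutes with the isomorphism in the exact form $G_2 \restriction f(Z) = f\bigl(G_1\restriction Z\bigr)$, for it is this pointwise identity, rather than abstract isomorphism-invariance alone, that forces $f$ to carry the indexing family $G_1/G_0$ onto $G_2/G_0$ subset by subset.
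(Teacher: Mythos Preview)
Your argument is correct and is precisely the ``straightforward'' proof the paper declares it omits: you use the restriction Lemma for the $\podo$-step (giving $G_1/G_0 = G_3/G_0$) and the evident commutation of restriction with an isomorphism for the $\cong$-step, then compose via the factorization \eqref{def:rowne}. There is nothing to add; this is the intended route.
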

\begin{prop}\label{invar:sub}
  Let $G_0\in\grafy(Z_0)$, $|Z_0|  = k < |X|$.
  For $G\in\grafy(X)$ we define
  \begin{equation}
    \#(G;G_0) := 
    \left| G/G_0
    \right|.
  \end{equation}
  \begin{sentences}\itemsep-2pt
  \item
    Let $G_1,G_2\in\grafy(X)$.
    If $G_1\rowne G_2$ then 
    $\#(G_1;G_0) = \#(G_2;G_0)$.
  \item
    $\#(\varkappa(G);\varkappa(G_0)) = \#(G;G_0)$.
  \end{sentences}
\end{prop}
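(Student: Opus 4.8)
The plan is to obtain part~(a) as an immediate corollary of Proposition~\ref{invar:struk}, and to prove part~(b) by showing that the two families whose sizes are being compared are in fact \emph{equal} as subfamilies of $\sub_k(X)$, so that their cardinalities coincide for a trivial reason.

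For part~(a): Proposition~\ref{invar:struk} supplies, whenever $G_1\rowne G_2$, a bijection $f$ of $X$ whose induced action on $\sub_k(X)$ carries the family $G_1/G_0$ onto $G_2/G_0$. Since this induced map $Z\mapsto f(Z)$ is itself a bijection of $\sub_k(X)$, it restricts to a bijection between the two families and hence preserves their cardinality; thus $\#(G_1;G_0)=|G_1/G_0|=|G_2/G_0|=\#(G_2;G_0)$. This step is a one-liner once \ref{invar:struk} is in hand.

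For part~(b) I would first record a purely set-theoretic commutation identity: for every $Z\subset X$ one has $\varkappa(G)\restriction Z=\varkappa(G\restriction Z)$, where on the right the complement is formed inside $\sub_2(Z)$. Indeed, if $G=\struct{X,{\cal E}}$ then the edge set of $\varkappa(G)\restriction Z$ is $(\sub_2(X)\setminus{\cal E})\cap\sub_2(Z)=\sub_2(Z)\setminus({\cal E}\cap\sub_2(Z))$, which is exactly the edge set of $\varkappa(G\restriction Z)$. Next I would verify that $\rowne$ is stable under complementation even when the compared graphs sit on distinct (equinumerous) vertex sets: if $G\restriction Z\rowne G_0$, say $G\restriction Z\podo G_3\cong G_0$ with $G_3\in\grafy(Z)$, then Proposition~\ref{uzup:global}(\ref{wlas2}) applied on $Z$ gives $\varkappa(G\restriction Z)\podo\varkappa(G_3)$, while any isomorphism also matches non-edges, so $G_3\cong G_0$ yields $\varkappa(G_3)\cong\varkappa(G_0)$; hence $\varkappa(G\restriction Z)\rowne\varkappa(G_0)$. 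Because $\varkappa$ is an involution, the converse follows by applying the same reasoning to $\varkappa(G),\varkappa(G_0)$. Combining with the commutation identity, for each fixed $Z\in\sub_k(X)$ we get $G\restriction Z\rowne G_0\iff\varkappa(G)\restriction Z\rowne\varkappa(G_0)$, i.e.\ $G/G_0=\varkappa(G)/\varkappa(G_0)$; taking sizes gives $\#(G;G_0)=\#(\varkappa(G);\varkappa(G_0))$.

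The only genuinely delicate point is the bookkeeping of \emph{which} vertex set each $\varkappa$ refers to: the restriction $G\restriction Z$ and its complement live on $Z$, the graphs $G_0$ and $\varkappa(G_0)$ live on $Z_0$, and the global complement $\varkappa(G)$ lives on $X$. The identity $\varkappa(G)\restriction Z=\varkappa(G\restriction Z)$ is precisely what reconciles the global complement on $X$ with the local complement on $Z$, and it is the hinge of the whole argument; once it is secured, everything else is a routine transport of the $\podo$- and $\cong$-relations through $\varkappa$.
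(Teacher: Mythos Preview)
Your argument is correct. The paper actually omits the proof of this proposition entirely, remarking only that it ``is straightforward and is omitted here''; your write-up supplies precisely the routine verification the authors leave to the reader, deriving (a) directly from Proposition~\ref{invar:struk} and (b) from the commutation identity $\varkappa(G)\restriction Z = \varkappa(G\restriction Z)$ together with the $\varkappa$-stability of $\podo$ and $\cong$ recorded in Proposition~\ref{uzup:global}.
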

Proof is straightforward and is omitted here, but the facts formulated in 
\ref{invar:struk} and \ref{invar:sub}
will be frequently used.

The following notation will be also convenient
\begin{description}
\item[$G_0^n$] $=G_0 \dcup N_{n-k}$ where $k = |X_0|$, $G_0\in\grafy(X_0)$.
\end{description}
Analogous notation will be used when only the $\cong$-type of $G_0$ will be important,
e.g. $L_k^n$, $(K_2 \dcup K_2)^6$ etc.

\def\ilepod(#1,#2){\#\big({#1};{#2}\big)}
From \eqref{klasy:1}  we can relatively easily compute the following
formulas
\par\noindent
\begin{minipage}[m]{0.49\textwidth}
\begin{eqnarray}
\label{wzor:pody1K}
  \ilepod(L_n,K_3) & = & (n-2)(n-3)
\\ \label{wzor:pody2K}
  \ilepod(L_n,K_4) & = & \textstyle{\binom{n-3}{2}} 
\\ \label{wzor:pody3K}
  \ilepod(L_n,K_m) & = & 0 \quad\text{for }m>4
\\
\label{wzor:pody1N}
  \ilepod(L_n,N_3) & = & (n-2) + \textstyle{\binom{n-2}{3}}
\\ \label{wzor:pody2N}
  \ilepod(L_n,N_m) & = & \textstyle{\binom{n-m+1}{m}}
\end{eqnarray}
\end{minipage}
\begin{minipage}[m]{0.49\textwidth}
\begin{eqnarray}
  \nonumber
  \text{ assume that} && n > 3
\\
\label{wzor:pody4K}
  \ilepod(C_n,K_3) & = & n(n-4)
\\ \label{wzor:pody5K}
  \ilepod(C_n,K_4) & = & \textstyle{\frac{n(n-5)}{2}}
\\ \label{wzory:pody6K}
  \ilepod(C_n,K_m) & = & 0 \quad\text{for }m>4
\\
\label{wzor:pody4N}
  \ilepod(C_n,N_3) & = & n + \textstyle{\frac{n(n-4)(n-5)}{6}}
\\ \label{wzor:pody6N}
  \ilepod(C_n,N_m) & = & \textstyle{\frac{n\binom{n-m-1}{m-1}}{m}}
\end{eqnarray}
\end{minipage}
\begin{proof}
  The reasoning is standard, we shall only show in several examples
  how to handle with the formulas like these.
  \par Ad \eqref{wzor:pody1K} 
  It is impossible to find $K_3$ in $L_n$. So, we search for $K_2\dcup K_1$ within $L_n$.
  These are obtained by an edge $e$ of $L_n$ and a vertex $v$ sufficiently far from the endpoints of $e$.
  If $e$ is the first or the last in the path $L_n$ then $v$ can be chosen in $n-3$ ways; 
  if $e$ is intermediate ($L_n$ contains $n-3$ such edges) then $v$ can be chosen in $n-4$ ways.
  \par Ad \eqref{wzor:pody2K}
  We must determine all the $K_2\dcup K_2$ subgraphs in $L_n$. So, we must find a pair of edges
  $\{i_1,i_1+1\}$, $\{ i_2,i_2+1\}$ such that
  $1\leq i_1,\, i_1+3 \leq i_2 \leq n-1$. Elementary combinatorics justifies that the number
  of such pairs $(i_1,i_2)$ is as claimed.
 \par Ad \eqref{wzor:pody3K}
  If a graph contains $K_A\dcup K_B$ with $|A\cup B|\geq 5$
  then it contains a point of rank at least $3$; clearly, $L_n$
  has no such a point.
  \par Ad \eqref{wzor:pody1N}
  We must determine all bipartite $K_{\{i_1\},\{ i_2,i_3 \}}$ and all $N_{i_1,i_2,i_3}$ contained in 
  $L_n$. In the first case we choose $1< i_1 <n$ (in $n-2$ ways) and set $i_2=i_1-1$, $i_3 = i_1+1$.
  In the second case we look for sequences 
  $1\leq i_1, i_1+2\leq i_2, i_2+2\leq i_3\leq n$.
\end{proof}

From this, by analogous reasonings,  we get more complex formulas,
e.g. 
\begin{multline}
 \label{wzor:pody11}
  \ilepod(L_{n_1}\dcup \ldots\dcup L_{n_k}\dcup C_{l_1}\dcup\ldots\dcup C_{l_t})^n,K_3)  = 
  \\ 
  \sum_{i=1}^{k}[(n_i-2)(n_i-3) + (n_i-1)(n-n_i)]
  + \sum_{i=1}^t[l_i(l_i-4) + l_i(n-l_i)],
\end{multline}
\begin{multline}
 \label{wzor:pody22}
  \ilepod(L_{n_1}\dcup \ldots\dcup L_{n_k}\dcup C_{l_1}\dcup\ldots\dcup C_{l_t})^n,K_4)  = 
  \\ 
  \sum_{i=1}^{k} \textstyle{\binom{n_i-3}{2}}
  +
  \sum_{j=1}^t \textstyle{\frac{l_j(l_j-5)}{2}}\,
  +
  \\
  \sum_{1\leq i_1<i_2\leq k}^t[(n_{i_1-1})(n_{i_2}-1)
  + 
  \sum_{1\leq j_1 < j_2\leq t}l_{j_1} l_{j_2}
  +
  \sum_{i=1}^k\sum_{j=1}^t (n_i - 1)l_j;
\end{multline}
we assume here that $l_1,...,l_t >3$.

\section{Inductive enumerating of $\rowne$-classes}\label{sec:induction}


We begin with the following `inductive' observation.
Let $X_0$ be a set, $w\notin X_0$, $X = X_0\cup\{w\}$.
Then each $G\in\grafy(X)$ can be presented in the form
\begin{multline}\label{eq:indukcja}
  G = G_0 \syminus K_{w,Z}, \text{ where } Z\subset X_0,\, G_0\in\grafy(X_0),
  \\
  Z = \{ x\in X_0\colon \{ w,x \}\in G \} \text{ and }
  G_0 = G\restriction X_0.
\end{multline}
Assume that $G$ has form \eqref{eq:indukcja}.
\begin{lem}\label{lem:indukcja}
  If $G_0 \podo G'_0$, $G_0 = G'_0 \syminus K_{A,X_0\setminus A}$, 
  then $G\podo G'_0 \syminus K_{w,A\syminus Z}$.
\end{lem}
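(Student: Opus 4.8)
The plan is to reduce the whole statement to a single computation in the abelian group $\struct{\grafy(X),\syminus}$ and then to recognise the result as an element of $\bipar(X)$. First I would substitute the two hypotheses into one another: from $G = G_0 \syminus K_{w,Z}$ and $G_0 = G'_0 \syminus K_{A,X_0\setminus A}$ I obtain
\[
  G \;=\; G'_0 \syminus K_{A,X_0\setminus A} \syminus K_{w,Z}.
\]
Throughout I regard $G'_0$, the bipartite graph $K_{A,X_0\setminus A}$, and the stars $K_{w,Z}$, $K_{w,A\syminus Z}$ all as structures on the common vertex set $X=X_0\cup\{w\}$ (adjoining $w$ as an isolated vertex where it does not occur), so that $\syminus$ is legitimately defined on them.

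Since $\podo$ is congruence modulo $\bipar(X)$, it suffices to prove that
\[
  H \;:=\; G \syminus \bigl(G'_0 \syminus K_{w,A\syminus Z}\bigr) \;\in\; \bipar(X).
\]
Because every element of $\struct{\grafy(X),\syminus}$ has order two and the operation is abelian, the two copies of $G'_0$ cancel, leaving
\[
  H \;=\; K_{A,X_0\setminus A} \syminus K_{w,Z} \syminus K_{w,A\syminus Z}.
\]
I would then separate the edges of $H$ into those lying inside $X_0$ and those incident with $w$. The stars contribute no edge inside $X_0$, so there $H$ coincides with $K_{A,X_0\setminus A}$; conversely $K_{A,X_0\setminus A}$ contributes no edge at $w$, so the $w$-edges of $H$ are exactly those of $K_{w,Z}\syminus K_{w,A\syminus Z}$.

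The heart of the argument is the identification of the star at $w$. Since $K_{\{w\},P}\syminus K_{\{w\},Q} = K_{\{w\},P\syminus Q}$ for any $P,Q\subset X_0$, the $w$-edges of $H$ form the star $K_{\{w\},\,Z\syminus(A\syminus Z)}$, and the elementary set identity
\[
  Z \syminus (A \syminus Z) \;=\; A
\]
(which one checks by splitting $A$ into $A\cap Z$ and $A\setminus Z$) collapses this to $K_{\{w\},A}$. Combining the two parts, the edge set of $H$ consists of all pairs inside $X_0$ joining $A$ to $X_0\setminus A$ together with all pairs $\{w,a\}$ with $a\in A$; this is precisely the complete bipartite graph $K_{A,X\setminus A}$ on the whole set $X$, where $X\setminus A = (X_0\setminus A)\cup\{w\}$. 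Hence $H\in\bipar(X)$, and therefore $G \podo G'_0 \syminus K_{w,A\syminus Z}$, as claimed.

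The only delicate point I anticipate is the bookkeeping between $X_0$ and $X$: one must verify that \emph{promoting} both the bipartite graph $K_{A,X_0\setminus A}$ and the star at $w$ to the larger vertex set merges them into the single cut $K_{A,X\setminus A}$ on $X$, with $w$ landing on the side opposite $A$. Once the identity $Z\syminus(A\syminus Z)=A$ is in hand, everything else is routine manipulation in the symmetric-difference group.
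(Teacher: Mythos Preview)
Your argument is correct and is essentially the same as the paper's: both reduce to the pair of identities $K_{\{w\},Z}\syminus K_{\{w\},A\syminus Z}=K_{\{w\},A}$ and $K_{A,X_0\setminus A}\syminus K_{\{w\},A}=K_{A,X\setminus A}$, the paper merely arranging the same cancellation by inserting $K_{A,X\setminus A}\syminus K_{A,X\setminus A}$ and regrouping rather than first forming the difference $H$.
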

\begin{proof}
  It suffices to note that 
  $K_{A,X\setminus A}\syminus K_{A,X_0\setminus A} = K_{w,A}$
  and
  $K_{w,A}\syminus K_{w,Z} = K_{w,A\syminus Z}$.
  Then
  \begin{math}
    G = G_0 \syminus K_{w,Z} 
      = G'_0 \syminus K_{A,X_0\setminus A} \syminus K_{w,Z}
      = G'_0 \syminus K_{A,X_0\setminus A} \syminus K_{A,X\setminus A} 
        \syminus K_{A,X\setminus A} \syminus K_{w,Z}
      = G'_0 \syminus K_{w,A} \syminus K_{w,Z} \syminus K_{A,X\setminus A}
      = (G'_0 \syminus K_{w,A\syminus Z}) \syminus K_{A,X\setminus A}.
  \end{math}
\end{proof}
In consequence,
to determine all the types of graphs on $X$ it suffices to choose a point
$w \in X$, and for each type $G_0$ of a graph on $X_0 = X\setminus \{w\}$
enumerate all, up to an isomorphism of $G_0$, $k$-subsets $Z$ of $X_0$
such that $2 k \leq |X|$. Each type of a graph on $X$ is realized as 
$G_0 \syminus K_{w,Z}$ with so obtained $G_0$'s and $Z$'s.
To complete the task it suffices to verify which of the graphs on the list
composed so far are $\rowne$-equivalent and which are not.
Let us illustrate how this procedure works and let us apply it to the case $n=|X| = 6$.

\newcounter{graftyp}\setcounter{graftyp}{0}
\def\refive#1{5:\ref{typ5:#1}}
\def\refsix#1{6:\ref{typ6:#1}}
\let\resix\refsix

Let us quote the following
\begin{prop}[\normalfont{\cite[page 204]{pascvebl}}]\label{Klasyf:5}
  There are exactly 7 $\rowne$-types of graphs on $5$ vertices.
  These are the following: \setcounter{graftyp}{0}

\begin{center}
\begin{tabular}{llll}
  \refstepcounter{graftyp}5:\thegraftyp\label{typ5:1} $K_5$, $\podo K_4^5$;
&
  \refstepcounter{graftyp}5:\thegraftyp\label{typ5:2} $N_5 = \varkappa(K_5)$;
&
  \refstepcounter{graftyp}5:\thegraftyp\label{typ5:3} $C_5$;
&
  \refstepcounter{graftyp}5:\thegraftyp\label{typ5:4} $L_2^5 = K_2^5$;
\\
  \refstepcounter{graftyp}5:\thegraftyp\label{typ5:5} $\varkappa(L_2^5)$, $\podo K_3^5$;
&
  \refstepcounter{graftyp}5:\thegraftyp\label{typ5:6} $L_3^5$;
&
  \refstepcounter{graftyp}5:\thegraftyp\label{typ5:7} $\varkappa(L_3^5) \podo L_5$.
\end{tabular}
\end{center}
\end{prop}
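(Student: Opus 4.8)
The plan is to run the inductive procedure attached to Lemma~\ref{lem:indukcja}, taking as input the known classification for $n=4$ (the three types $K_4$, $N_4$, $L_4$), and then to separate the resulting candidates with the numerical invariant $\#(\,\cdot\,;K_3)$ of \ref{invar:sub}. Fix $w\in X$ and put $X_0=X\setminus\{w\}$, so $|X_0|=4$; by \eqref{eq:indukcja} every $G\in\grafy(X)$ has the form $G=G_0\syminus K_{w,Z}$ with $G_0=G\restriction X_0$ and $Z\subset X_0$. The argument then splits into a completeness half (at most $7$ types) and a separation half (at least $7$ types).

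For completeness I would enumerate, for each of the three $\rowne$-types $G_0\in\{K_4,N_4,L_4\}$, one representative $Z$ from each orbit of $k$-subsets of $X_0$ with $2k\le|X|$ under $\mathrm{Aut}(G_0)$, and form $G_0\syminus K_{w,Z}$. Since $\mathrm{Aut}(K_4)=\mathrm{Aut}(N_4)=S_4$ is transitive on $k$-sets, these give three candidates each; $\mathrm{Aut}(L_4)$ has order $2$, so the path contributes $1+2+4=7$ candidates (one empty set, two vertex-orbits, four pair-orbits). This produces thirteen graphs, and by the reduction described after Lemma~\ref{lem:indukcja} every $\rowne$-type on $X$ occurs among them.

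The main work, and the main obstacle, is to identify the isomorphism type of each of the thirteen candidates and to collapse them onto the seven listed. I would exploit the $\varkappa$-duality (from \eqref{def:rowne} and \ref{uzup:global}) to halve the bookkeeping: it suffices to locate the sparse representatives $N_5$, $L_2^5$, $L_3^5$, $C_5$, after which $K_5$, $\varkappa(L_2^5)$, $\varkappa(L_3^5)$ and the self-dual $C_5$ follow. The empty-$Z$ cases already give $N_5$, $K_4^5\podo K_5$ (by \eqref{klasy:1}) and $L_4^5$; the content lies in the genuine merges, which require explicit switches, e.g. $L_4\dcup N_1\podo C_5$ via $K_{\{1,4\},\{2,3,5\}}$, and $K_4\syminus K_{w,\{a\}}\rowne\varkappa(L_2^5)$ (its complement is a star, which is $\podo$ a single edge). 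Producing such switching sets for the remaining candidates, and checking that none escapes these seven classes, is the delicate part.

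Finally, for separation I would evaluate $\#(\,\cdot\,;K_3)$ on the seven representatives. Using \eqref{wzor:pody1K} and \eqref{wzor:pody4K}, together with the duality \ref{invar:sub}(ii) (writing $K_3=\varkappa(N_3)$), one obtains the values $0,3,4,5,6,7,10$ for $N_5,L_2^5,L_3^5,C_5,L_5,\varkappa(L_2^5),K_5$ respectively. These seven values are pairwise distinct, so by \ref{invar:sub}(i) the seven types are pairwise $\rowne$-inequivalent. Combined with completeness, this yields exactly $7$ types.
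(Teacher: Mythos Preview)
The paper does not supply a proof of Proposition~\ref{Klasyf:5}; it is quoted verbatim from \cite[p.~204]{pascvebl} and then used as input for the $n=6$ classification in Theorem~\ref{Klasyf:6}. So there is no ``paper's own proof'' to compare against.

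Your plan is correct and is precisely the $n=5$ instance of the procedure the paper carries out for $n=6$: start from the three $\rowne$-types on four vertices, list the $3+3+7=13$ candidates $G_0\syminus K_{w,Z}$ (your orbit counts under $\mathrm{Aut}(K_4)$, $\mathrm{Aut}(N_4)$, $\mathrm{Aut}(L_4)$ are right), collapse them by explicit switches, and separate by $\#(\,\cdot\,;K_3)$. The merges you single out are the right ones; in particular $L_4^5\podo C_5$ via $K_{\{1,4\},\{2,3,w\}}$ is the only identification that is not immediate from $\varkappa$-duality or a single $\mu_a$. One pleasant simplification over the $n=6$ case: here the values of $\#(\,\cdot\,;K_3)$ on the seven representatives are $0,3,4,5,6,7,10$, pairwise distinct, so \ref{invar:sub}(i) alone finishes the separation and you never need the structural invariant of \ref{invar:struk} that the paper has to invoke for $n=6$.
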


\def\dodajkrop{%
\let\xtheenumii\theenumii
\def\theenumii{.\xtheenumii}}

Our goal (one of some) is to prove the following
\begin{thm}[$6$-graphs]\label{Klasyf:6}
  Let $G$ be a graph on $6$ vertices. Then $G$ is $\rowne$-equivalent to one of
  the following graphs. \setcounter{graftyp}{0}

\begin{center}
\begin{tabular}{llll}
  \refstepcounter{graftyp}6:\thegraftyp\label{typ6:1}  $K_6$; 
&
  \refstepcounter{graftyp}6:\thegraftyp\label{typ6:2}  $N_6$; 
&
  \refstepcounter{graftyp}6:\thegraftyp\label{typ6:3}  $L_2^6$; 
&
  \refstepcounter{graftyp}6:\thegraftyp\label{typ6:4}  $\varkappa(L_2^6)$, $\podo K_4^6,\, C_3\dcup L_3$; 
\\
  \refstepcounter{graftyp}6:\thegraftyp\label{typ6:6}  $(L_2\dcup L_2)^6$; 
&
  \refstepcounter{graftyp}6:\thegraftyp\label{typ6:5}  $C_5^6$; 
&
  \refstepcounter{graftyp}6:\thegraftyp\label{typ6:7}  $L_3^6$; 
&
  \refstepcounter{graftyp}6:\thegraftyp\label{typ6:8}  $K_3^6$; 
\\
  \refstepcounter{graftyp}6:\thegraftyp\label{typ6:9}  $(L_3 \dcup L_2)^6$;
&
  \refstepcounter{graftyp}6:\thegraftyp\label{typ6:10}  $L_4^6$; 
&
  \refstepcounter{graftyp}6:\thegraftyp\label{typ6:12}  $L_3\dcup L_3$, 
&
  \refstepcounter{graftyp}6:\thegraftyp\label{typ6:11} $\varkappa(L_3^6)$; $\podo (C_3\dcup L_2)^6$
\\
  \refstepcounter{graftyp}6:\thegraftyp\label{typ6:13}  $C_4^6$, $\podo \varkappa(C_6)$;
&
  \refstepcounter{graftyp}6:\thegraftyp\label{typ6:14} $L_5^6$;
&
  \refstepcounter{graftyp}6:\thegraftyp\label{typ6:15} $L_6$;
&
  \refstepcounter{graftyp}6:\thegraftyp\label{typ6:16} $C_6$, $\podo\varkappa(L_2\dcup L_2\dcup L_2)$.
\end{tabular}
\end{center}
  %
  No two graphs in this list are $\rowne$-equivalent. 
\end{thm}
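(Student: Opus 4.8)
The plan is to prove the two assertions of Theorem~\ref{Klasyf:6} separately: \emph{completeness}, that every graph on six vertices is $\rowne$-equivalent to one of the sixteen listed graphs, and \emph{distinctness}, that no two of the sixteen are $\rowne$-equivalent. For completeness I would run the inductive procedure described after Lemma~\ref{lem:indukcja}. Fix $w$, put $X_0=X\setminus\{w\}$ with $|X_0|=5$, and write each $G\in\grafy(X)$ as $G=G_0\syminus K_{w,Z}$ in the form \eqref{eq:indukcja}. By Lemma~\ref{lem:indukcja}, if $G_0\podo G_0'$ then $G_0\syminus K_{w,Z}$ is $\podo$-equivalent to $G_0'\syminus K_{w,Z'}$ for a suitable $Z'$, so it suffices to let $G_0$ range over representatives of the seven $\rowne$-types of Proposition~\ref{Klasyf:5} rather than over all graphs on $X_0$. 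Since local complementation $\mu_w$ sends $G_0\syminus K_{w,Z}$ to $G_0\syminus K_{w,X_0\setminus Z}$, only $|Z|\in\{0,1,2\}$ need be considered; and since an automorphism of $G_0$ fixing $w$ carries $G_0\syminus K_{w,Z}$ to the isomorphic $G_0\syminus K_{w,\varphi(Z)}$, within each type it is enough to take one $Z$ from each orbit of $\mathrm{Aut}(G_0)$ on the subsets of $X_0$ of size $0,1,2$. This produces a short, explicit list of candidate graphs that, by construction, meets every $\rowne$-class on six vertices.

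I would then identify each candidate $G_0\syminus K_{w,Z}$ with one entry of the list. For this I read off the isomorphism type of the candidate and, when it is not already named, move it onto the list by $\podo$-steps: boolean complementation is available through Proposition~\ref{uzup:global} (its part on $\varkappa$ and $\podo$), and the explicit coincidences recorded in the statement --- $\varkappa(L_2^6)\podo K_4^6\podo C_3\dcup L_3$, $\varkappa(L_3^6)\podo(C_3\dcup L_2)^6$, $C_4^6\podo\varkappa(C_6)$ and $C_6\podo\varkappa(L_2\dcup L_2\dcup L_2)$ --- let me recognise the less transparent ones. Carrying this out for every candidate gives completeness.

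For distinctness I would use the $\rowne$-invariants $\#(G;G_0)$ of Propositions~\ref{invar:struk} and~\ref{invar:sub}, evaluated from the closed formulas \eqref{wzor:pody1K}--\eqref{wzor:pody6N} and their union forms \eqref{wzor:pody11}--\eqref{wzor:pody22}, with the complement rule $\#(\varkappa G;\varkappa G_0)=\#(G;G_0)$ halving the labour. The single pair $\bigl(\#(G;K_3),\#(G;K_4)\bigr)$ already separates fourteen of the sixteen types; for instance $\#(K_6;K_3)=20$, $\#(N_6;K_3)=0$, and among the four types with $\#(\,\cdot\,;K_3)=10$ one has $\#(K_3^6;K_4)=3$, $\#((L_3\dcup L_2)^6;K_4)=2$, $\#(L_5^6;K_4)=1$, $\#(C_5^6;K_4)=0$. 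Exactly two collisions survive: $L_4^6$ and $C_4^6$ both give $(8,0)$, and $L_6$ and $C_6$ both give $(12,3)$. Each is broken by a single five-vertex count: since every five-vertex restriction of $C_4^6$ is $\rowne L_3^5$ and every one of $C_6$ is $\rowne L_5$, neither cyclic graph has a restriction $\rowne C_5$, whereas $L_4^6$ and $L_6$ each have two, so $\#(L_4^6;C_5)=\#(L_6;C_5)=2$ while $\#(C_4^6;C_5)=\#(C_6;C_5)=0$. Thus all sixteen carry pairwise distinct invariant data and are pairwise $\rowne$-inequivalent.

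I expect the main obstacle to be the bookkeeping of the completeness half rather than any single hard idea: one must check that the candidate list collapses \emph{exactly} onto the sixteen entries, neither creating a seventeenth $\rowne$-class nor silently merging two of the sixteen, and this means tracking an isomorphism type through each operation $G_0\syminus K_{w,Z}$ and supplying the explicit $\podo$-reductions. The invariant table is the safeguard that keeps the two halves consistent: because the sixteen signatures are distinct, a candidate can be $\rowne$-equivalent to at most one entry, so exhibiting a single isomorphism-and-$\podo$ reduction pins it down. The genuinely delicate point is the pair of near-symmetric graphs $L_4^6,C_4^6$ and $L_6,C_6$, precisely where the coarse invariants $\#(\,\cdot\,;K_3)$ and $\#(\,\cdot\,;K_4)$ fail and the five-vertex count $\#(\,\cdot\,;C_5)$ becomes indispensable.
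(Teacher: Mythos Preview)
Your plan is correct and matches the paper's proof almost exactly. The completeness half is identical: run the inductive step of Lemma~\ref{lem:indukcja} with $w=6$, let $G_0$ range over the seven representatives of Proposition~\ref{Klasyf:5}, take one $Z$ with $|Z|\le 2$ from each $\mathrm{Aut}(G_0)$-orbit, and reduce each resulting $G_0\syminus K_{w,Z}$ to an entry of the list by explicit $\syminus K_{A,X\setminus A}$ steps. The paper carries out precisely this case enumeration.

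For distinctness you and the paper agree on using $\#(G;K_3)$ and $\#(G;K_4)$ to separate fourteen of the sixteen types, leaving only the pairs $\{L_4^6,C_4^6\}$ and $\{L_6,C_6\}$. Here the arguments diverge slightly. The paper invokes the structural invariant of Proposition~\ref{invar:struk}: in $L_4^6/N_4$ (respectively $L_6/K_4$) the three $4$-sets share a common $2$-subset, while in $C_4^6/N_4$ (respectively $C_6/K_4$) they do not. You instead stay with the numerical invariant of Proposition~\ref{invar:sub} and compute $\#(\,\cdot\,;C_5)$, observing that deleting a vertex from $C_4^6$ yields $L_3^5$ or $C_4^5\rowne L_3^5$, and from $C_6$ yields $L_5$, neither of which is $\rowne C_5$, whereas $L_4^6$ and $L_6$ each have exactly two $5$-vertex restrictions of type $L_4^5\rowne C_5$. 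Both arguments are valid; yours is marginally more uniform (purely numerical), while the paper's showcases the finer invariant of Proposition~\ref{invar:struk}.
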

\begin{proof}
  Let us assume that graphs classified here are defined on the set $\{ 1,2,3,4,5,6 \}$,
  write $w = 6$ and apply \ref{lem:indukcja}. Let $X_0 = \{ 1,2,3,4,5 \}$.
  So, we obtain the list of graphs of the form $G_0\syminus K_{w,Z}$, 
  $G_0$ is one from among those enumerated in \ref{Klasyf:5} 
  and $Z\in\sub_1(X_0)\cup\sub_2(X_0)\cup\{ \emptyset\}$.
  In what follows we shall indicate mainly sets of edges of corresponding graphs.
  \par
  To shorten notation we shall also write ``i.j = $\varkappa$(i',j')",
  if $G = \varkappa(G')$, $G$ stands on the position i.j in the list below, and $G'$ has
  the position i'.j'.
  Analogous meaning has notation ``i.j = i'.j'", ``$G$ = i.j", ``i.j$\;\rowne\;$i'.j'", and ``$G \rowne$ i.j".
  The symbol {\small$\bigcirc$} indicates the case when the resulting graph already belongs to those enumerated through
  \resix{1}-\resix{16} or it coincides with a graph considered earlier.
  So, it means `there is nothing to prove in this case'.
 \begin{enumerate}[1.]\itemsep-2pt
  \item Let $G_0$ in \refive{1}
    \begin{enumerate}[{\theenumi}1]\itemsep-2pt\setcounter{enumii}{-1}\dodajkrop
    \item\label{g1:0} 
      $Z=\emptyset$. Then  
      $G = \sub_2(X_0) = K_{X_0} \dcup K_{\{ 6 \}} \rowne K_X$, so 
      $G$ has the type $K_6$, declared in \resix{1}.\myend
    \item\label{g1:1}  
      $Z = \{ 5 \}$. Then
      $G = \sub_2(X_0) \cup \{ \{ 5,6 \} \}$ and
      $G \rowne \varkappa(L_2^6)$, declared in \resix{4}. \myend
      \\
      Moreover, $G \rowne C_3 \dcup L_3$. 
    \item\label{g1:2}  
      $Z = \{ 4,5 \}$. Then
      $G = \sub_2(X_0) \cup \{ \{ 5,6 \}, \{ 4,6 \} \}$
      and $G \rowne $ \ref{g5:1}, \ref{g7:5}.
    \end{enumerate}
  \item Let $G_0$ in \refive{2}
    \begin{enumerate}[{\theenumi}1]\itemsep-2pt\setcounter{enumii}{-1}\dodajkrop
    \item\label{g2:0}  
      $Z = \emptyset$. Then
      $G = \emptyset$, so $G$ has the type $N_6$, declared in \resix{2}.\myend
    \item\label{g2:1}  
      $Z = \{ 5 \}$. Then
      $G = \{ \{ 5,6 \} \}$, so $G$ has the type $L_2^6$, declared in \resix{3}.\myend
    \item\label{g2:2}  
      $Z = \{ 4,5 \}$. Then
      $G = \{ \{ 5,6 \},  \{ 4,6 \} \}$, so $G$ has the type $L_3^6$, declared in \resix{7}. \quad\strut\myend
    \end{enumerate}
  \item Let $G_0$ in \refive{3}. Say, $G$ is the cycle $1-2-3-4-5-1$. 
    Considering the automorphism group of $G_0$ 
    we  see that it suffices to consider the following cases only.
    \begin{enumerate}[{\theenumi}1]\itemsep-2pt\setcounter{enumii}{-1}\dodajkrop
    \item\label{g3:0} 
      $Z = \emptyset$.
      Then $G$ has the type $C_5^6$, declared in \resix{5}.\myend
    \item\label{g3:1} 
      $Z = \{ 1 \}$.
      Then $G \rowne $ \ref{g6:5}
    \item\label{g3:2}     
      $Z = \{ 1,2 \}$.
      Then $G \rowne $ \ref{g7:1}.
    \item\label{g3:3} 
      $Z = \{ 1,3 \}$.
      Then $G\rowne $ \ref{g4:5}
    \end{enumerate}
  \item Let $G_0$ in \refive{4}. Say, the unique edge of $G_0$ is $\{ 1,2 \}$. It is seen that
    it suffices to consider the following sets $Z$.
    \begin{enumerate}[{\theenumi}1]\itemsep-2pt\setcounter{enumii}{-1}\dodajkrop
    \item\label{g4:0} 
      $Z = \emptyset$. 
      Then $G$ has the type $L_2^6$, equal to \ref{g2:1}.\myend
    \item\label{g4:1} 
      $Z = \{ 1 \}$. 
      Then $G$ is, in fact, the path $6-1-2$, so it has the type $L_3^6$
      (and $G = $ \ref{g2:2}).\myend
    \item\label{g4:2} 
      $Z = \{ 3 \}$. 
      Then $G$ has two, disjoint, edges: 
      it is $(L_2\dcup L_2)^6$, declared in \resix{6}.\myend
    \item\label{g4:3} 
      $Z = \{ 1,2 \}$. 
      Then $G$ is the triangle $1,2,6$, so it is $K_3^6$, 
      declared in \resix{8}.\myend
    \item\label{g4:4} 
      $Z = \{ 3,4 \}$. 
      Then $G$ consists of the 3-path $3-6-4$ and the edge $1-2$ 
      and thus it is $(L_3 \dcup L_2)^6$, declared in \resix{9}.\myend
    \item\label{g4:5} 
      $Z = \{ 1,3 \}$. 
      Then $G$ consists of the $4$-path
      $2-1-6-3$; it is $L_4^6$, declared in \resix{10}.\myend
    \end{enumerate}
  \item Let $G_0$ in \refive{5}.
    Then $G_0 = \varkappa(G'_0)$, where $G'_0$ is given in \refive{4}.
    Consequently, $G \rowne \varkappa(G')$, where $G'$ is a one among those
    enumerated through \ref{g4:0}--\ref{g4:5}.
    \begin{enumerate}[{\theenumi}1]\itemsep-2pt\setcounter{enumii}{-1}\dodajkrop
    \item\label{g5:0} 
      $G = \varkappa(L_2^6)$. $G =$ \ref{g1:1}.\myend
    \item\label{g5:1} 
      $G = \varkappa(L_3^6)$, declared in \resix{11}.\myend
      \\
      Moreover, 
      $G \rowne (C_3 \dcup L_2)^6$.
    \item\label{g5:2} 
      $G = \varkappa((L_2\dcup L_2)^6) \rowne $ \ref{g6:4}.
    \item\label{g5:3} 
      $G = \varkappa(K_3^6) \rowne $ \ref{g4:3}.
    \item\label{g5:4} 
      $G = \varkappa((L_3\dcup L_2)^6) \rowne$ \ref{g4:4}.
    \item\label{g5:5} 
      $G = \varkappa(L_4^6) \rowne $ \ref{g7:1}; moreover $G \rowne L_2\dcup L_4$. 
    \end{enumerate}
  \item Let $G_0$ in \refive{6}. One can assume that $G_0$ is the path $1-2-3$, and $4,5$ are 
    isolated. The following cases must be considered.
    \begin{enumerate}[{\theenumi}1]\itemsep-2pt\setcounter{enumii}{-1}\dodajkrop
    \item\label{g6:0} 
     $Z= \emptyset$.
     Then $G = L_3^6$ (as in \ref{g4:1} and \ref{g2:2}). \myend
    \item\label{g6:1}
     $Z = \{ 4 \}$. Then $G$ consist of the 3-path $1-2-3$ and the edge $4-6$, so 
     $G = (L_2\dcup L_3)^6$, as in \ref{g4:4}.\myend
    \item\label{g6:2}
     $Z = \{ 1 \}$.
     Then $G$ is the path $6-1-2-3$, so $G = L_4^6 = $ \ref{g4:5}.\myend
    \item\label{g6:3}
     $Z = \{ 2 \}$; then $G \approx $ \ref{g4:1}.
    \item\label{g6:4}
     $Z = \{ 4,5 \}$.
     Then $G$ consist of two disjoint 3-paths $1-2-3$ and $4-6-5$, so 
     $G = L_3\dcup L_3$, declared in \resix{12}.\myend
    \item\label{g6:5}
     $Z = \{ 4,1 \}$.
     Then $G$ is the path $4-6-1-2-3$, so $G = L_5^6$, declared in \resix{14}.\myend
    \item\label{g6:6}
     $Z = \{ 4,2 \}$; then $G \rowne$ \ref{g4:5}.
    \item\label{g6:7}
     $Z = \{ 1,3 \}$.
     Then $G$ is the closed cycle $5-1-2-3-5$, so 
     $G = C_4^6$, declared in \resix{13}.\myend
    \item\label{g6:8}
     $Z = \{ 1,2 \}$. Then $G \rowne $ \ref{g4:4}.
    \end{enumerate}
  \item Let $G_0$ in \refive{7}.\label{cas:g7}
    One can assume that $G_0$ is the 5-path $1-2-3-4-5$. The following sets $Z$ must be 
    considered.
    \begin{enumerate}[{\theenumi}1]\itemsep-2pt\setcounter{enumii}{-1}\dodajkrop
    \item\label{g7:0}
      $Z = \emptyset$.
      Then $G = L_5^6$ = \ref{g6:5}.\myend
    \item\label{g7:1}
      $Z = \{ 1 \}$. Then $G$ is the path 
      $6-1-2-3-4-5$, so $G = L_6$, declared in \resix{15}.\myend
    \item\label{g7:2}
      $Z = \{ 2\}$. Then $G \rowne $ \ref{g6:1} and 
      $G \rowne $ \ref{g7:6}.
    \item\label{g7:3}
      $Z = \{ 3 \}$. Then $G \rowne $ \ref{g3:1}.
    \item\label{g7:4}
      $Z = \{ 1,5 \}$.
      Then $G$ is the closed cycle 
      $6-1-2-3-4-5-6$, i.e. $G = C_6$, declared in \resix{16}. \myend
    \item\label{g7:5}
      $Z = \{ 1,2 \}$: cf.  \ref{g1:2}.\myend
    \item\label{g7:6}
      $Z = \{ 1,3 \}$: cf. \ref{g7:2}.\myend
    \item\label{g7:7}
      $Z = \{ 1,4 \}$:  Then $G =$  \ref{g3:1}. 
    \item\label{g7:8}
      $Z = \{ 2,3 \}$. Then $G \rowne $ \ref{g7:1}.
    \item\label{g7:9}
      $Z = \{ 2,4 \}$: Then $G \rowne$ \ref{g4:2}.
    \end{enumerate}
  \end{enumerate}
  Ad  \ref{g1:1}
    Note that $G \syminus K_{\{ 1,5 \},\{ 2,3,4,6\}} \cong  C_3 \dcup L_3$.
    Moreover, $G \syminus K_{\{ 6 \},\{ 1,2,3,4,5\}} \cong \varkappa(L_2^6)$.
  \par Ad \ref{g1:2}
    $G \syminus K_{\{ 1,5 \},\{ 2,3,4,6\}}$ is the path $5-1-6-4$ connected with
    the triangle $4,3,2$ i.e. $G \rowne$  \ref{g7:5}.
    Next, 
    $G \syminus K_{\{ 6 \},\{ 1,2,3,4,5\}} = \varkappa(L_3^6) = \varkappa(\text{\ref{g4:1}})$
    = \ref{g5:1}.
  \par Ad \ref{g3:1}
    $G \syminus K_{\{ 1,4 \},\{ 2,3,5,6\}}$ is the path 
    $1-3-2-4-6$ i.e. $L_5^6$ = \ref{g6:5}.
  \par Ad \ref{g3:2}
    $G \syminus K_{\{ 1,3 \},\{ 2,4,5,6\}}$ is the path $1-4-5-3-6-2$ i.e. $= L_6$ = \ref{g7:1}.
  \par Ad \ref{g3:3}
    $G \syminus K_{\{ 1,3 \},\{ 2,4,5,6\}}$ is the $L_4$-path $1-4-5-3$ = \ref{g4:5}.
  \par Ad \ref{g5:1}
    $G \syminus K_{\{ 3,4,5 \},\{ 1,2,6\}}$ is the union of one edge $2-6$ and the triangle
    $3,4,5$, so it is $(L_2\dcup C_3)^6$.
  \par Ad \ref{g5:2}
    $G \syminus K_{\{ 1,2,4 \},\{ 3,5,6\}}$ is the union of two $L_3$-paths:
    $1-4-2$ and $3-5-6$ = \ref{g6:4}.
  \par Ad \ref{g5:3}
    $G \syminus K_{\{ 1,2,6 \},\{ 3,4,5\}}$ is the triangle $3,4,5$, so we obtain
    $K_3^6 =$ \ref{g4:3}.
  \par Ad \ref{g5:4}
    $G \syminus K_{\{ 1,2,5 \},\{ 3,4,6 \}}$ is the union of the edge $3-4$ and 
    the $L_3$-path $1-5-2$.
  \par Ad \ref{g5:5}
    $G \syminus K_{\{ 1,2,4 \},\{ 3,5,6 \}}$ is the path $2-4-1-6-5-3$ = \ref{g7:1}.
    Moreover, 
    $G \syminus K_{\{ 4,5 \},\{ 1,2,   3,6\}}$ is the union of the
    $L_4$-path $1-3-2-6$ and the edge $4-5$, so $G \rowne L_2\dcup L_4$.
  \par Ad \ref{g6:3}
    $G \syminus K_{\{ 2 \},\{ 1,3,4,5,6\}}$ is the $L_3$-path $4-2-5$.
  \par Ad \ref{g6:6}
    $G \syminus K_{\{ 2 \},\{ 1,3,4,5,6\}}$ is the $L_4$-path $5-2-4-6$.
  \par Ad \ref{g6:8}
    $G \syminus K_{\{ 2 \},\{ 1,3,4,5,6\}}$ is the union of the $L_3$-path
    $4-2-5$ and the edge $1-6$.
  \par Ad \ref{g7:2}
    $G \syminus K_{\{ 2,4 \},\{ 1,3,5,6\}}$ is the union of the $L_3$-path $1-4-6$
    and the edge $2-5$, i.e = \ref{g4:4}.
    Next,
    $G \syminus K_{\{ 3 \},\{ 1,2,4,5,6\}}\cong $ \ref{g7:6}.
  \par Ad \ref{g7:3}
    $G \syminus K_{\{ 1,4,6 \},\{ 2,3,5\}}$ is the $5$-cycle $1-3-2-6-5-1$
    with the edge $2-4$ added, so it is $\cong $ \ref{g3:1}.
  \par Ad \ref{g7:8}
    $G \syminus K_{\{ 1,4,6 \},\{ 2,3,5 \}}$ is the $L_6$-path 
    $4-2-3-1-5-6$.
  \par Ad \ref{g7:9}
    $G \syminus K_{\{ 2,4 \},\{ 1,3,5,6\}}$ is the union of two disjoint 
    edges $1-4$ and $2-5$, which is = \ref{g4:2}.

  To complete the proof we analyse Table \ref{tab:paramy6}
  (note that $\#(G;N_3) = 20 - \#(G;K_3)$, as $G$ contains 20 subgraphs on 3 vertices). 
  From \ref{invar:sub} we see that only two cases must be distinguished
  by other methods. Here we apply \ref{invar:struk}.
  To distinguish \resix{10} and \resix{13} we note  that
  $L_4^6/N_4$ consists of three 4-sets with a common 2-set; while no such a
  common subset exists for the elements of $C_4^6/N_4$.
  Similarly, to distinguish \resix{15} and \resix{16} we note that
  $L_6/K_4$ consists, analogously, of three subsets with the common 2-set, and this is not true 
  for $C_6/K_4$.
\end{proof}

\begin{table}
\begin{tabular}{r|cccccccccc}
$G$ & \resix{1} & \resix{2} & \resix{3} & \resix{4} & \resix{5} & \resix{6} & \resix{7} & \resix{8} & \resix{9} & \resix{10} 
\\
$\#(G;K_3)$ &
  20 & 0 & 4  & 16 & 10  & 8  & 6  & 10  & 10  & 8
\end{tabular}

\strut\hfill
\begin{tabular}{r|cccccc}
$G$ & 
\resix{11} & \resix{12} & \resix{13} & \resix{14} & \resix{15} & \resix{16}
\\
$\#(G;K_3)$ &
   14   & 12  & 8   & 10   & 12   & 12
\\
\hline
\end{tabular}

\begin{center}
\begin{tabular}{r||cccc|ccc|ccc}
$G$ & 
\resix{5} & \resix{8} & \resix{9} & \resix{14} & \resix{6} & \resix{10} & \resix{13} & \resix{12} & \resix{15} & \resix{16} 
\\
$\#(G;K_4)$ &
   0 & 3 & 2  & 1   & 1  & 0   & 0   &  4   & 3   &  3
\\ 
$\#(G;N_4)$ &
   0 & 3  & 2 &  1  & 4  &  3  & 3  &   1  &  0  &  0
\\ \hline
\end{tabular}
\end{center}
\caption{Parameters $\#(G;K_3)$, $\#(G;K_4)$, and $\#(G;N_4)$ of
the graphs $G$ defined in \ref{Klasyf:6}}
\label{tab:paramy6}
\end{table}


\bigskip

\par\noindent\small
Authors' address:\\
El{\.z}bieta B{\l}aszko, 
Ma{\l}gorzata Pra{\.z}mowska, Krzysztof Pra{\.z}mowski\\
Institute of Mathematics, University of Bia{\l}ystok\\
K. Cio{\l}kowskiego 1M\\
15-245 Bia{\l}ystok, Poland\\
e-mail: 
{\ttfamily e.blaszko@wp.pl}, 
{\ttfamily malgpraz@math.uwb.edu.pl},
{\ttfamily krzypraz@math.uwb.edu.pl}

\end{document}